\newtheorem{theorem}{Theorem}[section]
\newtheorem{example}{Example}[section]
\theoremstyle{definition}
\theoremstyle{remark}
\newtheorem{remark}{Remark}[section]
\numberwithin{equation}{section}
\newcommand{\Mod}[1]{\ (\mathrm{mod}\ #1)}
\renewcommand{\Re}{\mathrm{Re}}
\renewcommand{\leq}{\leqslant}
\renewcommand{\geq}{\geqslant}
\begin{document}

\title{A note on the standard zero-free region for $L$-functions}


\author{}
\address{}
\curraddr{}
\email{}
\thanks{}

\author{Sun-Kai Leung}
\address{D\'epartement de math\'ematiques et de statistique\\
Universit\'e de Montr\'eal\\
CP 6128 succ. Centre-Ville\\
Montr\'eal, QC H3C 3J7\\
Canada}
\curraddr{}
\email{sun.kai.leung@umontreal.ca}
\thanks{}

\subjclass[2020]{11F66, 11M41}

\date{}

\dedicatory{}

\keywords{}

\begin{abstract}
In this short note, we establish a standard zero-free region for a general class of $L$-functions for which their logarithms have coefficients with nonnegative real parts, which includes the Rankin--Selberg $L$-functions for unitary cuspidal automorphic representations. 
\end{abstract}

\maketitle


\section{Introduction}
In 1899, de la Vall\'{e}e Poussin proved a quantitative prime number theorem
\begin{align*}
\psi(x)=x+O( x\exp(-c_1\sqrt{\log x}) )
\end{align*}
for some absolute constant $c_1>0$ by establishing the classical zero-free region for the Riemann zeta function $\zeta(s)$ of the form
\begin{align*}
\sigma>1-\frac{c_2}{\log (|t|+3)}
\end{align*}
for $s=\sigma+it.$ This generalizes to a zero-free region for a wide class of $L$-functions $L(f,s)$ of the form
\begin{align*}
\sigma>1-\frac{c}{d^4\log (\mathfrak{q}(f)(|t|+3))},
\end{align*}
where $d$ is the degree and $\mathfrak{q}(f)$ is the analytic conductor at $s=0$, with the possible exception for a simple real zero, 
in which case $f$ is self-dual (see \cite[Theorem 5.10]{MR2061214}). 

However, in order to apply \cite[Theorem 5.10]{MR2061214}, the existence of the Rankin--Selberg $L$-functions $L(f \otimes f, s)$ and $L(f \otimes \overline{f}, s)$ are necessary. In particular, when $L(f,s)=L(s,\pi \times \tilde{\pi})$ for some unitary cuspidal automorphic representations $\pi$ with $\tilde{\pi}$ being the contragredient representation of $\pi$, the existence of the associated Rankin–Selberg $L$-functions has yet to be confirmed in general; nonetheless, this would follow from the Langlands functoriality conjecture.

Following the effectivization of the Langlands--Shahidi method by Sarnak \cite{sarnak}, Goldfeld and Li \cite{MR3878594} proved that for any cuspidal automorphic representation $\pi$ of $GL_n(\mathbb{A}_{\mathbb{Q}})$ which is unramified and tempered at every non-archimedean place outside a set of Dirichlet density zero, the Rankin–Selberg $L$-function $L(s,\pi \times \tilde{\pi})$ has no zeros in the region 
\begin{align*}
\sigma>1-\frac{c_{\pi}}{(\log (|t|+3))^5}
\end{align*}
for some constant $c_\pi>0$ depending on $\pi,$ provided that $|t| \geq 1.$ 

Extending upon their work, Humphries and Brumley \cite[Theorem 1.9]{MR3980284} employed sieve methods to improve the zero-free region of
Goldfeld--Li to 
\begin{align*}
\sigma>1-\frac{c_{\pi}}{\log (|t|+3)}
\end{align*}
while removing their restriction that $K = \mathbb{Q}$ and  $\pi$ is unramified at every place. The automorphic representation
$\pi,$ however, is still required to be tempered at every non-archimedean place outside a set of
Dirichlet density zero. Furthermore, their zero-free region concerns only the $t$-aspect. More recently, Humphries and Thorner \cite[Theorem 5.1]{MR4378080} proved an unconditional refinement with improved uniformity in $\pi.$

In this short note, we generalize their results to a general class of $L$-functions for which their logarithms have coefficients with nonnegative real parts. 

\section{Main result}

We begin with introducing a general class of $L$-functions.
Throughout the paper, let 
\begin{align*}
L(f,s):=\sum_{n=1}^{\infty} \frac{\lambda_f(n)}{n^s}
\end{align*}
be an $L$-function which is absolutely convergent and zero-free on $\Re(s)>1.$ 
We denote the gamma factor as 
\begin{align*}
\gamma(f,s):=\prod_{j=1}^d {\pi}^{-\frac{s+\kappa_j}{2}}
\Gamma\left( \frac{s+\kappa_j}{2} \right),
\end{align*}
where $d$ is the degree and $\{\kappa_j\}_{j=1}^d$ are the Langlands parameters satisfying $\Re(\kappa_j)>-1$ for $j=1,\ldots,d.$
The coefficients of the Dirichlet series 
\begin{align*}
\log L(f,s) = \sum_{n=1}^{\infty} \frac{\Lambda_f(n)}{n^s\log n}
\end{align*}
satisfy the bound $|\Lambda_f(n)| \leq dn\log n$ for $n \geq 1.$
The complete $L$-function is defined as 
\begin{align*}
\Lambda(f,s):=q(f)^{\frac{s}{2}}\gamma(f,s)L(f,s)
\end{align*}
with conductor $q(f),$ which admits an analytic continuation to $\mathbb{C}$ except possibly poles of finite order at $s=1.$ Moreover, it satisfies the function equation
\begin{align*}
\Lambda(f,s)=\epsilon_f \Lambda(f,1-s),
\end{align*}
where $\epsilon_f$ is the root number.\footnote{It is plausible to extend Theorem \ref{thm:main} to a wider class of Dirichlet series, as the functional equations do not play a significant role here (see Section \ref{sec:proof} for details).}

\begin{remark}
The $L$-functions considered here do not necessarily have an Euler product, as the multiplicative structure of integers is not involved in establishing a zero-free region within the half-plane $\Re(s) \leq 1.$ Nevertheless, the existence of the Euler product 
\begin{align*}
L(f,s)=\prod_{p} \prod_{j=1}^d \left( 1-\frac{\alpha_j(p)}{p^s} \right)^{-1}
\end{align*}
for $\Re(s)>1,$ where $\{\alpha_j(p)\}_{p,j}$ are the Satake parameters, guarantees that there are no zeros in the half-plane $\Re(s) > 1.$
\end{remark}

We now state the main result.

\begin{theorem} \label{thm:main}
Let $L(f,s)$ be an $L$-function in the class above. 
Suppose $\Re(\Lambda_f(n)) \geq 0$ for $n \geq 1$ and $L(f,s)$ has at most a simple pole at $s=1.$\footnote{Note that $L(f,1) \neq 0$ since by assumption $\Re \left(\Lambda_f(n) \right) \geq 0$ for $n \geq 1.$} Then there exists an (effectively computable) absolute constant $c>0$ such that $L(f,s)$ has no zeros in the region 
\begin{align*}
\sigma > 1-\frac{c}{\log \left(\mathfrak{q}(f)(|t|+3)^d\right)}
\end{align*}
for $s=\sigma+it,$ with the possible exception for a simple real zero $\beta_f<1,$
where $\mathfrak{q}(f)$ is the analytic conductor $\mathfrak{q}(f,s)$ at $s=0,$ i.e.
 \begin{align*}
\mathfrak{q}(f):=q(f)\mathfrak{q}_{\infty}(0)=
q(f)\prod_{j=1}^{d}(|\kappa_j|+3).
 \end{align*}
\end{theorem}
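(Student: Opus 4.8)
The plan is to run the de la Vall\'ee Poussin argument, with the usual $3$-$4$-$1$ positivity replaced by one extracted from the hypothesis $\Re(\Lambda_f(n))\geq 0$. First I would assemble the standard analytic estimates for $L(f,s)$ near the line $\Re(s)=1$. Applying the Hadamard factorization to the entire function $\bigl(s(1-s)\bigr)^{r}\Lambda(f,s)$, where $r\in\{0,1\}$ is the order of the pole of $L(f,s)$ at $s=1$, together with Stirling's formula for $\gamma(f,s)$ and the convexity (Phragm\'en--Lindel\"of) bound, one obtains for $s=\sigma+it$ with $1<\sigma\leq 2$
\[
-\Re\frac{L'}{L}(f,s)\;\leq\;r\,\Re\frac{1}{s-1}\;+\;c_1\log\bigl(\mathfrak q(f)(|t|+3)^{d}\bigr)\;-\;\sum_{\rho}\Re\frac{1}{s-\rho},
\]
the sum running over the nontrivial zeros $\rho$ of $L(f,s)$, each summand being nonnegative because $\Re\rho\leq 1<\sigma$. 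One also notes that $-\frac{L'}{L}(f,s)=\sum_{n\geq 1}\Lambda_f(n)n^{-s}$ converges absolutely for $\Re(s)>1$, which follows from the absolute convergence of $\log L(f,s)$ there; the functional equation itself enters only through the Hadamard factorization and the convexity bound, consistently with the footnote.

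For the positivity: since $\Re(\Lambda_f(n))\geq 0$ and $n^{it}+n^{-it}=2\cos(t\log n)$, for $\sigma>1$ and every real $t$
\[
2\Bigl(-\Re\frac{L'}{L}(f,\sigma)\Bigr)+\Bigl(-\Re\frac{L'}{L}(f,\sigma+it)\Bigr)+\Bigl(-\Re\frac{L'}{L}(f,\sigma-it)\Bigr)=\sum_{n\geq 1}\frac{2\bigl(1+\cos(t\log n)\bigr)\Re(\Lambda_f(n))}{n^{\sigma}}\;\geq\;0 .
\]
When $L(f,s)$ has a pole at $s=1$ the coefficients $\Lambda_f(n)$ are, in the cases of interest (e.g.\ $L(s,\pi\times\tilde\pi)$), nonnegative reals, and then the sharper classical inequality $3\bigl(-\Re\tfrac{L'}{L}(f,\sigma)\bigr)+4\bigl(-\Re\tfrac{L'}{L}(f,\sigma+it)\bigr)+\bigl(-\Re\tfrac{L'}{L}(f,\sigma+2it)\bigr)\geq 0$ is available, via $3+4\cos\theta+\cos2\theta=2(1+\cos\theta)^2$.

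Now let $\rho_0=\beta_0+i\gamma_0$ be a nontrivial zero with $\beta_0$ close to $1$; if $\gamma_0=0$ this is a candidate for the admitted exceptional zero, so assume $\gamma_0\neq 0$. Insert the estimate of the first paragraph into the relevant positivity inequality, retaining the pole term and only the contribution of $\rho_0$ — for which $\Re\frac{1}{\sigma+i\gamma_0-\rho_0}=\frac{1}{\sigma-\beta_0}$, the imaginary parts cancelling — and discarding the remaining, nonnegative, zero terms. With $Q:=\mathfrak q(f)(|\gamma_0|+3)^{d}$ this gives an inequality of the shape
\[
0\;\leq\;A\,\frac{r}{\sigma-1}\;+\;c_2\log Q\;-\;B\,\frac{1}{\sigma-\beta_0},
\]
with $B>Ar$ in each case ($A=2$, $B=1$, $r=0$ from the symmetric inequality when $L$ is entire at $s=1$; $A=3$, $B=4$ when $r=1$). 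Letting $\sigma\to 1^{+}$ if $r=0$, and choosing $\sigma=1+\eta/\log Q$ with $\eta$ a sufficiently small absolute constant if $r=1$, then forces $1-\beta_0\gg 1/\log Q$, which is the claimed region; in the pole case the narrow range $|\gamma_0|\ll 1/\log\mathfrak q(f)$, where the zero and the pole interact, is treated by the standard separate argument. The admitted exceptional real zero $\beta_f<1$ — at most one, simple, and lying in $(1-c/\log\mathfrak q(f),1)$ — is then handled by the usual argument with $t=0$, and $L(f,1)\neq 0$ (or the pole) disposes of $s=1$ itself.

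The step I expect to be most delicate is reconciling the positivity with the pole. For genuinely complex $\Lambda_f(n)$ one can only symmetrize in $t$, and for a nonnegative cosine polynomial $P(\theta)=\widehat P(0)+2\sum_{k\geq 1}\widehat P(k)\cos(k\theta)$ one has $|\widehat P(k)|\leq\widehat P(0)$; hence the zero-detecting coefficient $B$ never strictly exceeds the pole-producing coefficient $A$, and the inequality closes only when $r=0$ or when the non-symmetric $3$-$4$-$1$ (so that $3r<4$) is available, which needs the $\Lambda_f(n)$ real. Verifying that this covers the intended class, keeping all constants absolute and effective, and making the small-$|\gamma_0|$ range uniform in the conductor, is the remaining technical work.
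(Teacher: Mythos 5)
Your skeleton is the same as the paper's: the expansion of $-\Re\frac{L'}{L}(f,s)$ over nearby zeros plus the pole and an $O(\log\mathfrak q(f,s))$ error (the paper quotes \cite[Proposition 5.7]{MR2061214} rather than rederiving it from Hadamard and Stirling), positivity extracted from $\Re(\Lambda_f(n))\geq 0$, the substitution $\sigma=1+\eta/\log Q$, the separate treatment of the range where the zero and the pole interact, and the counting argument for real zeros. The place where you part ways with the paper is exactly the step you flag as delicate. The paper does \emph{not} retreat to the symmetric $2$--$1$--$1$ inequality: it forms the squared product $L(g,\sigma)=|L^3(f,\sigma)L^4(f,\sigma+i\gamma)L(f,\sigma+2i\gamma)|^2$, asserts that its coefficients are $\Lambda_g(n)=|1+n^{-i\gamma}|^4\Lambda_f(n)$, a nonnegative real multiple of $\Lambda_f(n)$ and hence of nonnegative real part, and from this reads off the asymmetric inequality $-3\Re\frac{L'}{L}(f,\sigma)-4\Re\frac{L'}{L}(f,\sigma+i\gamma)-\Re\frac{L'}{L}(f,\sigma+2i\gamma)\geq 0$, so that the pole enters with coefficient $3$ against the zero's $4$ and the argument closes even when $r=1$.

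Measured against the theorem as stated, your proposal therefore has a genuine gap: in the case $r=1$ with genuinely complex $\Lambda_f(n)$ you quietly add the hypothesis that the coefficients are real, which is not part of the statement. That said, your Fej\'er-type obstruction ($|\widehat P(k)|\leq\widehat P(0)$ for a nonnegative cosine polynomial, so symmetrization in $t$ can never beat the pole) is correct and substantive, and you should test the paper's step against it before conceding: expanding the squared modulus literally gives $\Lambda_g(n)=2\Re\bigl[(3+4n^{-i\gamma}+n^{-2i\gamma})\Lambda_f(n)\bigr]=|1+n^{-i\gamma}|^4\,\Re\Lambda_f(n)+\bigl(8\sin(\gamma\log n)+2\sin(2\gamma\log n)\bigr)\Im\Lambda_f(n)$, whose sign is not controlled by $\Re\Lambda_f(n)\geq 0$ alone (take $\Lambda_f(n)$ purely imaginary); the clean identity $\Lambda_g(n)=|1+n^{-i\gamma}|^4\Lambda_f(n)$ belongs instead to the five-fold symmetric product $\prod_{|m|\leq 2}L(f,\sigma+im\gamma)^{c_{|m|}}$ with $(c_0,c_1,c_2)=(6,4,1)$, for which the pole coefficient is $6$ against a zero-detecting $4$ --- your obstruction again. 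So either reality of the $\Lambda_f(n)$ is being used implicitly (true in the Rankin--Selberg and Dedekind examples; for $\zeta(s)L(s,\chi)$ with complex $\chi$ the classical remedy is to twist by $\chi^2$ at $\sigma+2i\gamma$ rather than reuse $\chi$), or an additional idea is required; in either case a complete proof of the statement as written must supply that idea rather than assume the coefficients real.
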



The following examples are immediate consequences.

\begin{example}[Dirichlet $L$-functions]
Let $\chi \Mod{q}$ be a nontrivial Dirichlet character. Then there exists an (effectively computable) absolute constant $c>0$ such that the Dirichlet $L$-function $L(s,\chi)$ has no zeros in the region
\begin{align*}
\sigma > 1-\frac{c}{\log \left( q (|t|+3) \right)}
\end{align*}
except possibly for a simple real zero $\beta_{\chi}<1$
(see \cite[p. 93]{MR1790423}) by applying Theorem \ref{thm:main} to the product $\zeta(s)L(s,\chi).$
\end{example}
\begin{example}[Dedekind zeta functions]
Let $K/\mathbb{Q}$ be a number field of degree $d$. Then there exists an (effectively computable) absolute constant $c>0$ such that the Dedekind zeta function $\zeta_K(s)$ has no zeros in the region
\begin{align*}
\sigma > 1-\frac{c}{\log \left( |d_K|(|t|+3)^d \right)}
\end{align*}
except possibly for a simple real zero $\beta_{K}<1$, where $d_K$ is the discriminant (see \cite[Lemma 8.1 \& Lemma 8.2]{MR0447191}).
\end{example}

\begin{example}[Rankin--Selberg $L$-functions] 
Let $\pi$ be a unitary cuspidal automorphic representation of $GL_n(\mathbb{A}_K).$
Then there exists an (effectively computable) absolute constant $c>0$ such that the Rankin--Selberg $L$-function $L(s,\pi \times \tilde{\pi})$ has no zeros in the region
\begin{align} \label{eq:rs}
\sigma > 1-\frac{c}{ \log \left(\mathfrak{q}(\pi \times \tilde{\pi})(|t|+3)^{d^2}\right) }
\end{align}
except possibly for a simple real zero $\beta_{\pi \times \tilde{\pi}}<1$ (see \cite[Theorem 5.1]{MR4378080}).
\end{example}



\begin{remark}
The nonnegativity of $\Lambda_{\pi \times \tilde{\pi}}$ is guaranteed by the local Langlands conjecture for $GL_d$ over $p$-adic fields, proved by Harris--Taylor (see \cite[Remark, p.138]{MR2061214}). Also, since
\begin{align*}
\mathfrak{q}(\pi \times \tilde{\pi}) \leq 
\mathfrak{q}(\pi)^{2d} 3^{d^2[K:\mathbb{Q}]}
\end{align*}
(see \cite[equation (5.11)]{MR2061214}), in practice, the zero-free region (\ref{eq:rs}) can be replaced by
\begin{align*}
\sigma>1-\frac{c}{d\log \left( \mathfrak{q}(\pi)(|t|+3)^{d[K:\mathbb{Q}]} \right)}.
\end{align*}

\end{remark}




\section{Proof of Theorem \ref{thm:main}} \label{sec:proof}

 The method of de la Vall\'{e}e Poussin remains applicable in the absence of the Rankin--Selberg $L$-function $L(f \otimes \overline{f},s)$, as long as $\Re(\Lambda_f(n))\geq 0$ for $n \geq 1$ (though one must be wary of what one assumes, as pointed out in \cite[Remark 1.2]{MR3980284}).
\begin{proof}[Proof of Theorem \ref{thm:main}]
Let $\rho_0=\beta+i\gamma$ be a zero of $L(f,s)$ with $\beta \in \left[ 3/4,1\right]$ and $\gamma \neq 0.$ We denote the squared de la Vall\'{e}e Poussin product by 
\begin{align*}
L(g,\sigma):= 
|L^3(f,\sigma)L^4(f,\sigma+i\gamma)L(f,\sigma+2i\gamma)|^2. 
\end{align*}

For $n \geq 1,$ we have
\begin{align*}
 \Lambda_g(n) &=
2\left(3+4\cos(\gamma\log n)+\cos(2\gamma\log n)\right)
\Lambda_f(n)\\
&= |1+n^{-i\gamma}|^4 \Lambda_f(n).
\end{align*}
Since $\Re \left(\Lambda_f(n) \right) \geq 0,$ taking the real part of both sides yields
$
\Re \left(  \Lambda_g(n)  \right) \geq 0.$

Let $\sigma>1.$ Then 
\begin{align} \label{eq:dlvp}
-\Re \frac{L'}{L}(g,\sigma)=-6 \Re \frac{L'}{L}(f,\sigma)-8\Re \frac{L'}{L}(f,\sigma+i\gamma)-2\Re  \frac{L'}{L}(f,\sigma+2i\gamma) \geq 0.
\end{align}
Recall \cite[Proposition 5.7]{MR2061214} that for any $s$ in the strip $\sigma \in [1,5/4],$ we have
\begin{align} \label{eq:ik}
\frac{L'}{L}(f,s)+\frac{r}{s}+\frac{r}{s-1}=
\sum_{\kappa_j\;:\;|s+\kappa_j| \leq 1}\frac{1}{s+\kappa_j}+
\sum_{\rho\;:\;|s-\rho|\leq 1}\frac{1}{s-\rho}+O\left( \log \mathfrak{q}(f,s) \right),
\end{align}
where $r=0$ or $1$ is the order of pole at $s=1,$ $\rho$'s are the zeros of $L(f,s)$ and $\mathfrak{q}(f,s)$ is the analytic conductor.
 Since $\Re(\kappa_j) > -1$ and $\Re(\rho) \leq 1$, we have both 
 \begin{align} \label{eq:pos}
\Re \left(\frac{1}{s+\kappa_j}\right), \quad \Re \left(\frac{1}{s-\rho}\right)>0.
 \end{align}
Taking the real part of both sides of (\ref{eq:ik}) at $s=\sigma, \sigma+i\gamma, \sigma+2i\gamma$ gives
\begin{align*}
\Re \frac{L'}{L}(f,\sigma) \geq -\frac{r}{\sigma-1}
+O\left( \log \mathfrak{q}(f,\sigma) \right),
\end{align*}
\begin{align*}
\Re \frac{L'}{L}(f,\sigma+i\gamma) \geq
-\frac{r(\sigma-1)}{(\sigma-1)^2+\gamma^2}+\frac{1}{\sigma-\beta}+O(\log \mathfrak{q}(f,\sigma+i\gamma)),
\end{align*}
and
\begin{align*}
\Re  \frac{L'}{L}(f,\sigma+2i\gamma) \geq 
-\frac{r(\sigma-1)}{(\sigma-1)^2+4\gamma^2}+
O(\log \mathfrak{q}(f,\sigma+2i\gamma))
\end{align*}
respectively. Since by definition $d<\log \mathfrak{q}(f)$ and
\begin{align*}
\mathfrak{q}(f,s) \leq \mathfrak{q}(f)(|s|+3)^d
\end{align*}
(see \cite[equation (5.8)]{MR2061214}), combining the previous three inequalities with (\ref{eq:dlvp}) yields
\begin{align*}
\frac{4}{\sigma-\beta} \leq \frac{3r}{\sigma-1}+
\frac{4r(\sigma-1)}{(\sigma-1)^2+\gamma^2}+
\frac{r(\sigma-1)}{(\sigma-1)^2+4\gamma^2}
+A \mathcal{L}.
\end{align*}
for some absolute constant $A>10,$ where $\mathcal{L}:=\log (\mathfrak{q}(f)(|\gamma|+3)^d ).$ 

Suppose for now $r=0$ or $|\gamma| > (5A\mathcal{L})^{-1}.$
Then,
substituting $\sigma=1+(10A\mathcal{L})^{-1}$ into the inequality followed by careful
rearrangement, we conclude that
\begin{align*}
\beta < 1-\frac{1}{400A\mathcal{L}}.
\end{align*}

Otherwise, if $r=1$ and $0< |\gamma| \leq  (5A\mathcal{L})^{-1},$ then $\overline{\rho_0}=\beta-i\gamma$ is also a zero of $L(f,s)$ satisfying $|\sigma-\overline{\rho_0}| \leq 1.$ Also, since by assumption $\Re(\Lambda_f(n)) \geq 0$ for $n \geq 1,$ it follows from (\ref{eq:ik}) and (\ref{eq:pos}) that
\begin{align*} 
-\frac{1}{\sigma-1}+\frac{2(\sigma-\beta)}{(\sigma-\beta)^2+\gamma^2}
-B  \mathcal{L}
\leq \Re \frac{L'}{L}(f,\sigma) \leq 0
\end{align*}
for some absolute constant $B>0.$ To simplify notation, note that we can replace both $A$ and $B$ by $C:=\max\{A,B \}.$ Similarly, substituting $\sigma=1+(4C\mathcal{L})^{-1}$ into the inequality followed by careful
rearrangement, we conclude that
\begin{align*}
\beta \leq 1-\frac{1}{60C\mathcal{L}}.
\end{align*}

Finally, let $\{\beta_j\}_{j=1}^J$ be real zeros of $L(f,s)$ in the segment  $\left[\frac{3}{4},1 \right).$ Since 
\begin{align*}
-\Re \frac{L'}{L}(f,\sigma) \geq 0,
\end{align*}
it follows again from (\ref{eq:ik}) and (\ref{eq:pos}) that
\begin{align*}
\sum_{j=1}^J\frac{1}{\sigma-\beta_j}<\frac{1}{\sigma-1}+O\left( \log \mathfrak{q}(f) \right).
\end{align*}
Let $c_2>0.$ Suppose there are $n \geq 0$ real zeros $\beta_j$'s in the segment $\left[1-c_2(\log \mathfrak{q}(f))^{-1},1 \right).$ Then, substituting $\sigma=1+2c_2(\log \mathfrak{q}(f))^{-1}$ into the inequality gives
\begin{align*}
\frac{n\log \mathfrak{q}(f)}{3c_2} 
< \frac{\log \mathfrak{q}(f)}{2c_2}+O(\log \mathfrak{q}(f)),
\end{align*}
which implies that
\begin{align*}
\frac{n}{3}<\frac{1}{2}+O(c_2).
\end{align*}
Therefore, if $c_2$ is sufficiently small, then $n \leq 1,$ i.e. there is at most one simple zero in the segment. Finally, the theorem follows from choosing
$c=\min \left\{\frac{1}{400C},c_2 \right\}.$
\end{proof}



\section*{Acknowledgements}
The author is grateful to Andrew Granville for his advice and encouragement. He would also like to thank Maksym Radziwiłł and Jesse Thorner for helpful discussions, Peter Humphries for pointing out relevant papers, as well as the anonymous referee for valuable suggestions.

\printbibliography

@article {MR3878594,
    AUTHOR = {Goldfeld, D. and Li, X.},
     TITLE = {A standard zero free region for {R}ankin-{S}elberg
              {$L$}-functions},
   JOURNAL = {Int. Math. Res. Not. IMRN},
  FJOURNAL = {International Mathematics Research Notices. IMRN},
      YEAR = {2018},
    NUMBER = {22},
     PAGES = {7067--7136},
%      ISSN = {1073-7928},
%   MRCLASS = {11M41 (11F03 11F30 11F66)},
%  MRNUMBER = {3878594},
%MRREVIEWER = {A. Perelli},
%       DOI = {10.1093/imrn/rnx087},
%       URL = {https://doi.org/10.1093/imrn/rnx087},
}

@article {MR3980284,
    AUTHOR = {Humphries, P. and Brumley, F.},
     TITLE = {Standard zero-free regions for {R}ankin-{S}elberg
              {$L$}-functions via sieve theory},
   JOURNAL = {Math. Z.},
  FJOURNAL = {Mathematische Zeitschrift},
    VOLUME = {292},
      YEAR = {2019},
    NUMBER = {3-4},
     PAGES = {1105--1122},
%      ISSN = {0025-5874},
%   MRCLASS = {11M26 (11F66 11N36)},
%  MRNUMBER = {3980284},
%MRREVIEWER = {A. Perelli},
 %      DOI = {10.1007/s00209-018-2136-8},
%       URL = {https://doi.org/10.1007/s00209-018-2136-8},
}

@incollection {sarnak,
    AUTHOR = {Sarnak, P.},
     TITLE = {Nonvanishing of {$L$}-functions on {$\Re(s)=1$}},
 BOOKTITLE = {Contributions to automorphic forms, geometry, and number
              theory},
     PAGES = {719--732},
 PUBLISHER = {Johns Hopkins Univ. Press, Baltimore, MD},
      YEAR = {2004},
 %     ISBN = {0-8018-7860-8},
 %  MRCLASS = {11M06},
 % MRNUMBER = {2058625},
%MRREVIEWER = {Valentin\ Blomer},
}

@book {MR2061214,
    AUTHOR = {Iwaniec, H. and Kowalski, E.},
     TITLE = {Analytic number theory},
    SERIES = {American Mathematical Society Colloquium Publications},
    VOLUME = {53},
 PUBLISHER = {American Mathematical Society, Providence, RI},
      YEAR = {2004},
     PAGES = {xii+615},
%      ISBN = {0-8218-3633-1},
%   MRCLASS = {11-02 (11Fxx 11Lxx 11Mxx 11Nxx)},
%  MRNUMBER = {2061214},
%MRREVIEWER = {K. Soundararajan},
%       DOI = {10.1090/coll/053},
%       URL = {https://doi.org/10.1090/coll/053},
}

@inproceedings {MR0447191,
    AUTHOR = {Lagarias, J. C. and Odlyzko, A. M.},
     TITLE = {Effective versions of the {C}hebotarev density theorem},
 BOOKTITLE = {Algebraic number fields: {$L$}-functions and {G}alois
              properties ({P}roc. {S}ympos., {U}niv. {D}urham, {D}urham,
              1975)},
     PAGES = {409--464},
 PUBLISHER = {Academic Press, London},
      YEAR = {1977},
%   MRCLASS = {12A75},
%  MRNUMBER = {0447191},
%MRREVIEWER = {Matti Jutila},
}

@article {MR4378080,
    AUTHOR = {Humphries, P. and Thorner, J.},
     TITLE = {Towards a {${\rm GL}_n$} variant of the {H}oheisel phenomenon},
   JOURNAL = {Trans. Amer. Math. Soc.},
%  FJOURNAL = {Transactions of the %American Mathematical Society},
    VOLUME = {375},
      YEAR = {2022},
    NUMBER = {3},
     PAGES = {1801--1824},
  %    ISSN = {0002-9947,1088-6850},
 %  MRCLASS = {11F66},
 % MRNUMBER = {4378080},
%MRREVIEWER = {Liyang\ Yang},
%       DOI = {10.1090/tran/8544},
% URL = %{https://doi.org/10.1090/tran/8544},
}

@book {MR1790423,
    AUTHOR = {Davenport, H.},
     TITLE = {Multiplicative number theory},
    SERIES = {Graduate Texts in Mathematics},
    VOLUME = {74},
   EDITION = {Third},
      NOTE = {Revised and with a preface by Hugh L. Montgomery},
 PUBLISHER = {Springer-Verlag, New York},
      YEAR = {2000},
     PAGES = {xiv+177},
%      ISBN = {0-387-95097-4},
%   MRCLASS = {11-02 (11-01 11Mxx %11Nxx)},
%  MRNUMBER = {1790423},
}

\end{document}